%% file: main.tex
\documentclass[letterpaper,11pt]{article}
\usepackage[margin=1in]{geometry}
\usepackage{amsmath,amssymb,amsthm,mathtools,bm}
\usepackage{setspace,enumitem,hyperref,natbib}
\usepackage{authblk}

\newtheorem{theorem}{Theorem}
\newtheorem{proposition}{Proposition}
\newtheorem{lemma}{Lemma}

\newtheorem{assumption}{Assumption}

\newcommand{\R}{\mathbb{R}}
\newcommand{\N}{\mathbb{N}}

\newcommand{\E}{\mathbb{E}}
\newcommand{\Prob}{\mathbb{P}}
\newcommand{\boldp}{\mathbf{p}}
\newcommand{\boldR}{\mathbf{R}}
\newcommand{\boldone}{\mathbf{1}}
\newcommand{\boldphi}{\bm{\phi}}
\newcommand{\boldtheta}{\bm{\theta}}

\newcommand{\Var}{\operatorname*{Var}}
\newcommand{\Cov}{\operatorname*{Cov}}

\newcommand{\T}{\mathsf{T}}
\newtheorem{corollary}{Corollary}
\newcommand{\tbl}[2]{%
  \caption{#1}%
  \centering
  #2%
}
\newcommand{\RRAIPW}{\mathrm{RR\text{-}AIPW}}
\newcommand{\HRRAIPW}{\mathrm{H\text{-}RR\text{-}AIPW}}
\newcommand{\SepRRAIPW}{\mathrm{Sep\text{-}RR\text{-}AIPW}}

\title{Integration of external predictions for efficient estimation of the risk ratio}
\author[1,2]{Victoria Mezger}
\author[3]{Nils Kr{\"u}ger}
\author[1,3]{Georg Hahn}

\affil[1]{Harvard T.H. Chan School of Public Health, Boston, MA, USA}
\affil[2]{Heidelberg University, Department of Mathematics, Heidelberg, Germany}
\affil[3]{Brigham and Women's Hospital, Boston, MA, USA}

\date{
\vspace{-1em}
\textit{Contact:}
\texttt{victoria.mezger@math.uni-heidelberg.de} \quad
\texttt{nkruger1@bwh.harvard.edu} \quad
\texttt{ghahn@bwh.harvard.edu}
}

\begin{document}
\maketitle
\input{abstract}

\doublespacing
\input{introduction}
\input{methods_shortened}
\input{simulations2}
\input{discussion}

\appendix
\input{appendix_proofs_shortened}

\bibliographystyle{apalike}
\bibliography{main}
\end{document}

%% file: abstract.tex
\begin{abstract}
We consider the augmentation of randomized experiments or trials with data from observational studies for the purpose of improving statistical precision. In particular, we focus on the Hybrid Augmented Inverse Probability Weighting estimator, designed to integrate predictions from several foundation models while preserving valid statistical inference. In this article, we extend the Augmented Inverse Probability Weighting framework to the estimation of the risk ratio, the quotient of the absolute risk of an exposed to an unexposed group. Our approach allows one to use information from black-box foundation models trained on external and possibly unstructured data, yielding an estimator of the risk ratio whose asymptotic variance is never larger than the one of the default estimator based on experimental data alone.
\end{abstract}

%% file: introduction.tex
\section{Introduction}
\label{sec:introduction}
The augmentation of randomized experiments or trials with data from observational studies, for the purpose of improving statistical precision, is a timely and important research direction \citep{Lin2024}.

\citet{Bartolomeis2025} introduced the hybrid augmented inverse probability weighting (H-AIPW) estimator of the average treatment effect, which integrates predictions from several, possibly black-box, foundation models trained on external and unstructured data while preserving valid statistical inference: under minimal assumptions the resulting H-AIPW estimator is consistent and asymptotically normal, and its asymptotic variance never exceeds that of the standard estimator based on experimental data alone.

In this work, we extend this concept to AIPW estimation of the marginal risk ratio (RR-AIPW), also called the relative risk, defined as the ratio of the event probability in the exposed group to that in the unexposed group \citep{Sistrom2004, Carneiro2011}. We construct a direct hybrid risk ratio estimator and show that it retains the uniform domination of the variance already proven for H-AIPW. We further study an alternative construction that separately combines estimators for the treatment effect and control risk. In particular, we show that the second estimator is asymptotically normal, however it does not generally lead to a uniform domination of the variance because the sets of weights from the two estimators do not account for the joint covariance of the components.

Related work on integrating external data to improve estimation efficiency falls into three strands: frameworks that integrate foundation-model predictions into clinical-trial inference \citep{Bartolomeis2025, Poulet2025}; approaches that first test the compatibility of the experimental and external data, pooling them only if the test does not reject \citep{Luedtke2019, Hussain2023, Dahabreh2024, Bartolomeis2024a, Bartolomeis2024b}; and methods that debias a precise but biased observational estimate using an unbiased experimental estimator \citep{Kallus2018, Rosenman2023, Laan2024, Dang2025}.

%% file: methods_shortened.tex
\section{Risk ratio estimation}
\label{sec:riskratio}

\subsection{Setting and target parameter}
\label{sec:setup}
Suppose we observe independent copies $Z_i=(X_i,A_i,Y_i)$ $(i=1,\ldots,n)$ for $n \in \N$ individuals, where $X_i\in\R^d$ denotes baseline covariates in $d \in \N$ dimensions, $A_i\in\{0,1\}$ is treatment assignment and $Y_i\in\{0,1\}$ is the observed outcome. Let $Y_i(0)$ and $Y_i(1)$ denote binary potential outcomes, where $Y_i=Y_i(A_i)$. We assume that the following identification assumption holds.

\begin{assumption}[Identification]
\label{ass:simple-randomization}
The vector $\{X,Y(0),Y(1)\}$ is independent of $A$. The assignment probabilities satisfy $\pi_a=\Prob(A=a)>0$ for $a \in \{0,1\}$.
\end{assumption}

The treatment-specific risks are
\begin{equation}
\label{eq:pa}
p_a=\E\{Y(a)\}=\E(Y\mid A=a) \quad (a=0,1).
\end{equation}

We assume $p_0>0$ and target the marginal risk ratio, defined as
\begin{equation}
\label{eq:R}
R= \frac{p_1}{p_0}.
\end{equation}

Let $h^*(x,a)=\E(Y\mid X=x,A=a)$ denote the true outcome regression. We further assume we have an estimator $\hat h$ computed on external data satisfying the following assumption.

\begin{assumption}[Nuisance convergence]
\label{ass:nuisance}
The estimator $\hat{h}$ is trained on an auxiliary sample independent of $(Z_1,\ldots,Z_n)$, and there exists a square-integrable function $h^\dagger$ such that
\[
\|\hat h(\cdot,a)-h^\dagger(\cdot,a)\|_{L^2(P)}\longrightarrow_{P^*}0
\quad (a=0,1),
\]
where $P^*$ denotes the joint law of the trial and auxiliary samples.
\end{assumption}

The limit $h^\dagger$ need not equal the true $h^*$. In the simulations, the trial outcome regression is instead estimated by fixed-fold cross-fitting. The Supplementary Material proves the corresponding expansion under foldwise $L^2(P)$ convergence.

\subsection{RR-AIPW estimation of the treatment-specific risks}
\label{sec:AIPW_treatment}
For a square-integrable function $h:\R^d\times\{0,1\}\to\R$, define 
\begin{align}
\label{eq:psi1}
\psi^{(1)}(Z;h)
&=\frac{A}{\pi_1}\{Y-h(X,1)\}+h(X,1),\\
\label{eq:psi0}
\psi^{(0)}(Z;h)
&=\frac{1-A}{\pi_0}\{Y-h(X,0)\}+h(X,0).
\end{align}

\begin{proposition}[Unbiasedness]
\label{prop:unbiasedness}
For every square-integrable $h$, $\E\{\psi^{(a)}(Z;h)\}=p_a$ for $a=0,1$.
\end{proposition}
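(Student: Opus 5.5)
The plan is to prove the claim for $a=1$ and obtain $a=0$ by the symmetric argument, replacing $A$ with $1-A$ and $\pi_1$ with $\pi_0$. The key point is that $h$ is a fixed, deterministic square-integrable function: it is not fitted on the trial sample. Hence $h(X,1)$ is a function of $X$ alone, and Assumption~\ref{ass:simple-randomization} gives independence between $A$ and $\{X,Y(0),Y(1)\}$. First, integrability: $Y$ and $A$ are bounded, $\pi_1>0$, and $h(X,1)\in L^2(P)\subset L^1(P)$. So every term of $\psi^{(1)}(Z;h)$ is integrable, and linearity of expectation is justified.

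Next, write $AY = AY(1)$, which holds because $Y=Y(A)$, and expand
\[
\E\{\psi^{(1)}(Z;h)\} = \frac{1}{\pi_1}\E\{A\,Y(1)\} - \frac{1}{\pi_1}\E\{A\,h(X,1)\} + \E\{h(X,1)\}.
\]
By independence of $A$ from $(X,Y(1))$, the first expectation factorizes as $\E(A)\E\{Y(1)\}=\pi_1 p_1$. Likewise the second factorizes as $\pi_1\E\{h(X,1)\}$. The inner term therefore cancels the last term exactly, leaving $p_1$. For $a=0$, the same computation with $(1-A)Y=(1-A)Y(0)$ and $\E(1-A)=\pi_0$ gives $p_0$.

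There is no substantial obstacle. The only points requiring care are noting explicitly that $h$ is nonrandom relative to the trial data, so that the factorization is legitimate, and checking integrability of $A\,h(X,1)$. The latter follows from $|A\,h(X,1)|\le |h(X,1)|$. Equivalently, one could condition on $X$ and use $\E(A\mid X, Y(1))=\pi_1$.
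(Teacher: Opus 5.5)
Your proof is correct and uses the same ingredients as the paper's: consistency ($AY=AY(1)$) and independence of $A$ from $(X,Y(1))$ under Assumption~\ref{ass:simple-randomization}. The paper conditions on $X$ to obtain $\E\{\psi^{(1)}(Z;h)\mid X\}=\E\{Y(1)\mid X\}$ before integrating, whereas you factorize the unconditional expectations directly and also check integrability, which the paper leaves implicit; this is a minor variation rather than a different route.
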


As shown in Proposition~\ref{prop:unbiasedness}, misspecification of $h$ affects the variance but not the expectation of the score. The corresponding AIPW risk estimators are
\begin{equation}
\label{eq:phat}
\hat p_a(h)=n^{-1}\sum_{i=1}^n\psi^{(a)}(Z_i;h)
\quad (a=0,1),
\end{equation}
for which the following joint asymptotic result holds.

\begin{lemma}[Joint asymptotic linearity]
\label{lem:aipw-risk}
Under Assumptions~\ref{ass:simple-randomization} and \ref{ass:nuisance}, with $\hat{\boldp}(\hat h)=\{\hat p_1(\hat h),\hat p_0(\hat h)\}^{\T}$ and $\boldp=(p_1,p_0)^{\T}$,
\[
\sqrt n\{\hat{\boldp}(\hat h)-\boldp\}
=
\frac{1}{\sqrt n}\sum_{i=1}^n
\begin{pmatrix}
\psi^{(1)}(Z_i;h^\dagger)-p_1\\
\psi^{(0)}(Z_i;h^\dagger)-p_0
\end{pmatrix}
+o_{P^*}(1)
\rightsquigarrow N(0,\Gamma),
\]
where
\(
\Gamma=\Cov\left\{
\begin{pmatrix}
\psi^{(1)}(Z;h^\dagger)\\
\psi^{(0)}(Z;h^\dagger)
\end{pmatrix}
\right\}.
\)
\end{lemma}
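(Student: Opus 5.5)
The plan is to decompose each coordinate of $\sqrt n\{\hat{\boldp}(\hat h)-\boldp\}$ into an oracle term evaluated at the fixed limit $h^\dagger$, plus a remainder caused by replacing $h^\dagger$ with $\hat h$. We then show the remainder is $o_{P^*}(1)$ by conditioning on the auxiliary sample, and finish with the multivariate central limit theorem. For $a=1$ we write
\[
\sqrt n\{\hat p_1(\hat h)-p_1\}
=\frac{1}{\sqrt n}\sum_{i=1}^n\{\psi^{(1)}(Z_i;h^\dagger)-p_1\}
+\frac{1}{\sqrt n}\sum_{i=1}^n\{\psi^{(1)}(Z_i;\hat h)-\psi^{(1)}(Z_i;h^\dagger)\},
\]
and analogously for $a=0$. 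Setting $\Delta_1(x)=\hat h(x,1)-h^\dagger(x,1)$, the definition \eqref{eq:psi1} gives the pointwise identity
\[
\psi^{(1)}(Z;\hat h)-\psi^{(1)}(Z;h^\dagger)=\Bigl(1-\frac{A}{\pi_1}\Bigr)\Delta_1(X),
\]
and similarly $\psi^{(0)}(Z;\hat h)-\psi^{(0)}(Z;h^\dagger)=\{1-(1-A)/\pi_0\}\Delta_0(X)$.

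The key step is to control $R_{n,1}=n^{-1/2}\sum_i(1-A_i/\pi_1)\Delta_1(X_i)$. By Assumption~\ref{ass:nuisance}, $\hat h$ is independent of the trial data, so conditional on the auxiliary sample $\Delta_1$ is a fixed function. Under Assumption~\ref{ass:simple-randomization}, $A$ is independent of $X$ with $\E(1-A/\pi_1)=0$. Hence the summands are conditionally i.i.d. with conditional mean zero and conditional variance
\[
\E\{(1-A/\pi_1)^2\}\,\|\Delta_1\|_{L^2(P)}^2=\frac{\pi_0}{\pi_1}\|\Delta_1\|_{L^2(P)}^2 .
\]
This yields $\E(R_{n,1}^2\mid\text{aux})=(\pi_0/\pi_1)\|\Delta_1\|^2_{L^2(P)}$. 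Conditional Chebyshev then gives, for each $\epsilon>0$,
\[
P^*(|R_{n,1}|>\epsilon\mid\text{aux})\le \min\{1,\epsilon^{-2}(\pi_0/\pi_1)\|\Delta_1\|^2_{L^2(P)}\}.
\]
The right-hand side tends to zero in $P^*$-probability and is bounded by one, so dominated convergence gives $R_{n,1}=o_{P^*}(1)$. The same argument applies to $R_{n,0}$, with constant $\pi_1/\pi_0$. This conditioning argument is the main technical point. Care is needed because $\|\Delta_a\|$ is random and the convergence is only in probability, which is why we bound the conditional probability by $\min\{1,\cdot\}$ rather than taking unconditional expectations of $\|\Delta_a\|^2$, which need not be finite.

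With both remainders negligible, the leading vector is a normalized sum of i.i.d. mean-zero vectors, since $\E\psi^{(a)}(Z;h^\dagger)=p_a$ by Proposition~\ref{prop:unbiasedness}. It remains to check finite second moments. Because $Y\in\{0,1\}$, $\pi_a>0$, and $h^\dagger(\cdot,a)$ is square-integrable, each $\psi^{(a)}(Z;h^\dagger)$ is in $L^2$. The multivariate Lindeberg--L\'evy central limit theorem, applied via the Cram\'er--Wold device, then gives convergence to $N(0,\Gamma)$. Slutsky's lemma transfers this limit to $\sqrt n\{\hat{\boldp}(\hat h)-\boldp\}$.

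Finally, the trial data alone are i.i.d. under $P^*$ and independent of the auxiliary sample, so the law of the leading term under $P^*$ coincides with its law under $P$, and the central limit theorem applies under $P^*$ as well.
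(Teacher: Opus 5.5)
Your proposal is correct and follows essentially the same route as the paper: the exact decomposition around the fixed limit $h^\dagger$, the identity $\psi^{(1)}(Z;\hat h)-\psi^{(1)}(Z;h^\dagger)=(1-A/\pi_1)\Delta_1(X)$, conditional Chebyshev given the auxiliary sample with the bounded quantity $\min\{1,\cdot\}$ and bounded convergence, then stacking and the multivariate central limit theorem. You also make explicit details the paper leaves implicit, namely the $a=0$ constant $\pi_1/\pi_0$ and the square-integrability of $\psi^{(a)}(Z;h^\dagger)$.
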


Replacing $h^\dagger$ by $\hat h$ is therefore first-order negligible. The limiting influence function depends on the outcome-regression estimator only through its probability limit. When $h^\dagger=h^*$, the resulting RR-AIPW influence function is efficient.

\subsection{The RR-AIPW estimator}
\label{sec:direct}
Define the RR-AIPW estimator as the ratio of the AIPW risk estimators of eq.~\eqref{eq:phat}, that is
\begin{equation}
\label{eq:RRAIPW}
\hat R_{\RRAIPW}(h)=\frac{\hat p_1(h)}{\hat p_0(h)}.
\end{equation}
The RR-AIPW estimator is consistent and asymptotically normal as shown in the next result. 

\begin{proposition}[Asymptotic behaviour of the RR-AIPW estimator]
\label{prop:rr-aipw}
Under Assumptions~\ref{ass:simple-randomization} and \ref{ass:nuisance},
\[
\sqrt n\{\hat R_{\RRAIPW}(\hat h)-R\}
\rightsquigarrow N(0,V_R^{h^\dagger}),
\]
where $V_R^{h^\dagger}=\Var\{\phi_R(Z;h^\dagger)\}$ and
\begin{align}
\phi_R(Z;h) = \frac{\psi^{(1)}(Z;h)-p_1}{p_0} -
\frac{p_1}{p_0^2}\{\psi^{(0)}(Z;h)-p_0\}.
\label{eq:phiR}
\end{align}
\end{proposition}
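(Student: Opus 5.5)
The plan is to apply the delta method to the ratio map $g(u,v)=u/v$ at $\boldp=(p_1,p_0)^{\T}$, building on the joint expansion of Lemma~\ref{lem:aipw-risk}. Since $p_0>0$, the map $g$ is continuously differentiable on a neighbourhood of $\boldp$. Its gradient there is
\[
\nabla g(\boldp)=\left(\frac{1}{p_0},\,-\frac{p_1}{p_0^2}\right)^{\T}.
\]

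First I establish that $\hat R_{\RRAIPW}(\hat h)$ is well defined with probability tending to one. Lemma~\ref{lem:aipw-risk} gives $\sqrt n\{\hat{\boldp}(\hat h)-\boldp\}=O_{P^*}(1)$, so $\hat p_0(\hat h)\to p_0>0$ in $P^*$-probability. Hence $\Prob^*\{\hat p_0(\hat h)>p_0/2\}\to1$, and on the complement the estimator can be defined arbitrarily without affecting weak limits.

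Next comes the algebraic linearisation. On the event $\hat p_0(\hat h)>p_0/2$ we can write
\[
\hat R_{\RRAIPW}(\hat h)-R=\frac{\hat p_1-p_1}{\hat p_0}-\frac{p_1(\hat p_0-p_0)}{p_0\hat p_0}.
\]
Replacing $\hat p_0$ by $p_0$ in each denominator creates errors of order $(\hat p_0-p_0)(\hat p_a-p_a)/(p_0\hat p_0)$. After multiplying by $\sqrt n$, these errors are $O_{P^*}(1)\cdot o_{P^*}(1)=o_{P^*}(1)$. This yields
\[
\sqrt n\{\hat R_{\RRAIPW}(\hat h)-R\}=\nabla g(\boldp)^{\T}\sqrt n\{\hat{\boldp}(\hat h)-\boldp\}+o_{P^*}(1).
\]

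I then substitute the representation from Lemma~\ref{lem:aipw-risk}, which gives $n^{-1/2}\sum_{i=1}^n\phi_R(Z_i;h^\dagger)+o_{P^*}(1)$ with $\phi_R$ as in \eqref{eq:phiR}. The summands $\phi_R(Z_i;h^\dagger)$ are i.i.d. Their mean is zero by Proposition~\ref{prop:unbiasedness}, and their variance is finite because $h^\dagger$ is square-integrable and $Y$, $A/\pi_1$ and $(1-A)/\pi_0$ are bounded. The classical central limit theorem therefore applies, and Slutsky's lemma gives convergence to $N(0,V_R^{h^\dagger})$. Alternatively, the continuous mapping theorem applied to the limit $N(0,\Gamma)$ in the lemma gives the variance as $\nabla g(\boldp)^{\T}\Gamma\nabla g(\boldp)=\Var\{\phi_R(Z;h^\dagger)\}$.

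The only delicate point is that convergence and the $o_{P^*}(1)$ terms are taken under the joint law $P^*$, which includes the auxiliary sample. I would check that the delta-method remainder is controlled purely by the tightness of $\sqrt n(\hat{\boldp}-\boldp)$ under $P^*$, which holds regardless of the randomness in $\hat h$. I expect no serious obstacle, since all heavy lifting (negligibility of the nuisance estimation) is already contained in Lemma~\ref{lem:aipw-risk}.
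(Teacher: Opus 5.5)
Your proposal is correct and takes essentially the same route as the paper: the joint expansion of Lemma~\ref{lem:aipw-risk}, the delta method for $g(u,v)=u/v$ with gradient $(p_0^{-1},-p_1p_0^{-2})$, and then the central limit theorem with Slutsky's theorem. Your explicit linearisation of the ratio, and your check that $\hat p_0(\hat h)$ stays bounded away from zero with probability tending to one, spell out details the paper leaves implicit in its appeal to the delta method.
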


For a square-integrable prediction function $h$, define
\begin{equation}
\label{eq:prediction-error}
\epsilon[h](x)=h(x,1)-h^*(x,1)
+\frac{\pi_1R}{\pi_0}\{h(x,0)-h^*(x,0)\}.
\end{equation}

\begin{proposition}[Variance decomposition]
\label{prop:vardecomp}
Under Assumption~\ref{ass:simple-randomization}, for every square-integrable $h$,
\[
\phi_R(Z;h)=\phi_R(Z;h^*)+
\frac{1-A/\pi_1}{p_0}\epsilon[h](X).
\]
The two terms on the right are uncorrelated. Consequently,
\begin{equation}
\label{eq:vardecomp}
V_R^h=V_R^{h^*}+
\frac{\pi_0}{\pi_1p_0^2}\|\epsilon[h]\|_{L^2(P)}^2.
\end{equation}
\end{proposition}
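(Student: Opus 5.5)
The plan is a direct algebraic computation. First I establish the pointwise identity for $\phi_R(Z;h)-\phi_R(Z;h^*)$. Then I show the two pieces are uncorrelated by conditioning on $\{X,Y(0),Y(1)\}$ and using Assumption~\ref{ass:simple-randomization}. Finally I compute the second moment of the remainder term. Since $\phi_R$ in \eqref{eq:phiR} is affine in $(\psi^{(1)},\psi^{(0)})$ and the centering constants $p_1,p_0$ do not depend on $h$ (by Proposition~\ref{prop:unbiasedness}), only the scores themselves need to be compared.

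\textbf{Step 1 (identity).} From \eqref{eq:psi1}, $\psi^{(1)}(Z;h)-\psi^{(1)}(Z;h^*)=(1-A/\pi_1)\delta_1(X)$ with $\delta_a(x)=h(x,a)-h^*(x,a)$. Similarly, from \eqref{eq:psi0},
\[
\psi^{(0)}(Z;h)-\psi^{(0)}(Z;h^*)=\{1-(1-A)/\pi_0\}\delta_0(X)=\frac{A-\pi_1}{\pi_0}\delta_0(X)=-\frac{\pi_1}{\pi_0}\Bigl(1-\frac{A}{\pi_1}\Bigr)\delta_0(X).
\]
Substituting into \eqref{eq:phiR} gives
\[
\phi_R(Z;h)-\phi_R(Z;h^*)=\frac{1-A/\pi_1}{p_0}\Bigl\{\delta_1(X)+\frac{p_1}{p_0}\frac{\pi_1}{\pi_0}\delta_0(X)\Bigr\}=\frac{1-A/\pi_1}{p_0}\,\epsilon[h](X),
\]
which matches \eqref{eq:prediction-error} because $p_1/p_0=R$.

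\textbf{Step 2 (uncorrelatedness).} Both terms have mean zero: $\E\{\phi_R(Z;h^*)\}=0$ by Proposition~\ref{prop:unbiasedness}, and $\E(1-A/\pi_1\mid X)=0$ by independence of $A$ and $X$. It therefore suffices to show $\E\{(1-A/\pi_1)\epsilon[h](X)\,\phi_R(Z;h^*)\}=0$. I write $Y=AY(1)+(1-A)Y(0)$, so that
\[
\psi^{(1)}(Z;h^*)=\frac{A}{\pi_1}\{Y(1)-h^*(X,1)\}+h^*(X,1),
\]
and analogously for $\psi^{(0)}$. I then condition on $W=\{X,Y(0),Y(1)\}$, which is independent of $A$, and evaluate each contribution:
\begin{itemize}[nosep]
\item The terms $h^*(X,a)$ and the constants are multiplied by $\E(1-A/\pi_1)=0$, so they vanish.
\item The $A$-term uses $\E\{(1-A/\pi_1)A/\pi_1\}=-\pi_0/\pi_1$. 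It contributes a multiple of $\epsilon[h](X)\{Y(1)-h^*(X,1)\}$.
\item The $(1-A)$-term uses $\E\{(1-A/\pi_1)(1-A)\}=\pi_0$, since $A(1-A)=0$. It contributes a multiple of $\epsilon[h](X)\{Y(0)-h^*(X,0)\}$.
\end{itemize}
Finally, randomization gives $\E\{Y(a)\mid X\}=\E(Y\mid X,A=a)=h^*(X,a)$. Conditioning on $X$ therefore kills both surviving terms.

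\textbf{Step 3 (variance).} Using independence of $A$ and $X$,
\[
\E\bigl\{(1-A/\pi_1)^2\bigr\}=\pi_0+\pi_1\Bigl(1-\frac{1}{\pi_1}\Bigr)^2=\pi_0+\frac{\pi_0^2}{\pi_1}=\frac{\pi_0}{\pi_1}.
\]
Hence the second term has variance $\pi_0\|\epsilon[h]\|^2_{L^2(P)}/(\pi_1p_0^2)$, and \eqref{eq:vardecomp} follows by adding variances.

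The only delicate step is the uncorrelatedness argument. There one must correctly express $Y$ through the potential outcomes and use the joint independence of $A$ from $(X,Y(0),Y(1))$, rather than only from $X$, to factor the conditional expectations. Square-integrability of $h$, $h^*$ and boundedness of $Y$ ensure all moments used exist.
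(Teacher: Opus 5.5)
Your proof is correct, and Steps 1 and 3 match the paper's argument: the same score differences, the same substitution into $\phi_R$, and the same moment $\E\{(1-A/\pi_1)^2\}=\pi_0/\pi_1$. The difference is in how you verify orthogonality.

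The paper writes $\phi_R(Z;h^*)=\frac{A}{\pi_1p_0}\{Y-h^*(X,1)\}-\frac{R(1-A)}{\pi_0p_0}\{Y-h^*(X,0)\}+\{h^*(X,1)-Rh^*(X,0)\}/p_0$ and conditions on $(X,A)$. The two residual terms have conditional mean zero given $(X,A)$ simply because $h^*(X,A)=\E(Y\mid X,A)$, and the difference term $D$ is $(X,A)$-measurable. The remaining function of $X$ is then removed by $\E(D\mid X)=0$.

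You instead rewrite $Y$ through the potential outcomes, integrate out $A$ given $W=\{X,Y(0),Y(1)\}$, and then invoke $\E\{Y(a)\mid X\}=h^*(X,a)$. Your route makes the moments $-\pi_0/\pi_1$ and $\pi_0$ explicit. It is valid under Assumption~\ref{ass:simple-randomization}, but it uses more of that assumption than the result needs.

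For that reason, your closing remark is inaccurate: you say one \emph{must} use independence of $A$ from $(X,Y(0),Y(1))$ rather than only from $X$. The paper's conditioning shows the decomposition is a purely observed-data identity. It requires only $\Prob(A=1\mid X)=\pi_1$ and the definition of $h^*$ as the observed outcome regression; potential outcomes never enter.
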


In eq.~\eqref{eq:vardecomp}, $V_R^{h^*}$ is the semiparametric efficiency bound for $R$ in the randomized-trial model with known assignment probabilities \citep{tsiatis2006semiparametric}. Thus, $V_R^h=V_R^{h^*}$ if and only if $\epsilon[h]=0$ almost surely. Under equal allocation, the relevant error is $h(\cdot,1)-h^*(\cdot,1)+R\{h(\cdot,0)-h^*(\cdot,0)\}$. Correct prediction in both arms is sufficient, but not necessary, for efficiency.

\subsection{Direct hybrid RR-AIPW estimation}
\label{sec:hybrid_riskratio}

Suppose that, in addition to $\hat h$, there are $k \in \N$ square-integrable external outcome prediction functions $f_1,\ldots,f_k$. Each such function provides predictions under both treatment levels, is trained without the target-trial outcomes and is fixed with respect to $n$. Define the candidate RR-AIPW estimators
\begin{align*}
\hat R_{\RRAIPW,1} &= \hat R_{\RRAIPW}(\hat h),\\
\hat R_{\RRAIPW,j+1} &= \hat R_{\RRAIPW}(f_j)
\quad (j=1,\ldots,k),
\end{align*}
and let $\hat{\boldR}=(\hat R_{\RRAIPW,1},\ldots,\hat R_{\RRAIPW,k+1})^{\T}$. For the limiting influence functions, set $g_1=h^\dagger$ and $g_{j+1}=f_j$. For $\ell=1,\ldots,k+1$, let $\phi_{R,\ell}(Z)=\phi_R(Z;g_\ell)$, and define the stacked function
\[
\boldphi_R(Z)=\{\phi_{R,1}(Z),\ldots,\phi_{R,k+1}(Z)\}^{\T},
\qquad
\Omega=\Cov\{\boldphi_R(Z)\}.
\]
For any $\lambda \in \Lambda$ in the standard probability simplex in $k$ dimensions, given by $\Lambda=\{\lambda\in\R^{k+1}:\boldone^{\T}\lambda=1\}$, the hybrid RR-AIPW estimator $\hat R_\lambda=\lambda^{\T}\hat{\boldR}$ has asymptotic variance $\lambda^{\T}\Omega\lambda$. If $\Omega$ is nonsingular, the oracle weights minimizing the variance are
\begin{equation}
\label{eq:oracle-weight}
\lambda^*
=
\frac{\Omega^{-1}\boldone}{\boldone^{\T}\Omega^{-1}\boldone}.
\end{equation}

In order to implement this framework, let $\tilde p_a$ be a common consistent estimator of $p_a$ $(a=0,1)$, taken to be the cross-fitted trial-only AIPW risk estimator. Let $\hat g_{1i}$ denote the cross-fitted trial prediction used for observation $i$, and let $\hat g_{\ell i}=f_{\ell-1}$ for $\ell\geq2$. Define
\[
\hat\phi_{R,\ell i}
=
\frac{\psi^{(1)}(Z_i;\hat g_{\ell i})-\tilde p_1}{\tilde p_0}
-
\frac{\tilde p_1}{\tilde p_0^2}
\{\psi^{(0)}(Z_i;\hat g_{\ell i})-\tilde p_0\}.
\]
Writing $m=k+1$, $\hat{\boldphi}_{Ri}=(\hat\phi_{R,1i},\ldots,\hat\phi_{R,mi})^{\T}$ and $\bar{\boldphi}_R=n^{-1}\sum_i\hat{\boldphi}_{Ri}$, define the covariance estimator
\begin{equation}
\label{eq:Omegahat}
\hat\Omega
=
\frac{1}{n-1}\sum_{i=1}^n
(\hat{\boldphi}_{Ri}-\bar{\boldphi}_R)
(\hat{\boldphi}_{Ri}-\bar{\boldphi}_R)^{\T}.
\end{equation}
For numerical stability, we will consider the vanishing-ridge covariance estimator
\begin{equation}
\label{eq:Omegareg}
\hat\Omega_{\rm reg}
=
\hat\Omega+n^{-1/2}\bar\omega_R I_m,
\qquad
\bar\omega_R=m^{-1}\operatorname{tr}(\hat\Omega).
\end{equation}
If $\hat\Omega\to_P\Omega$, then $\bar\omega_R=O_P(1)$ and $\hat\Omega_{\rm reg}\to_P\Omega$.

\begin{theorem}[Direct hybrid RR-AIPW estimator]
\label{thm:hybrid-rr}
Suppose the preceding conditions of Section~\ref{sec:hybrid_riskratio} hold, let $\Omega$ be nonsingular and $\hat\Omega\to_P\Omega$. Let
\[
\hat\lambda
=
\frac{\hat\Omega_{\rm reg}^{-1}\boldone}
{\boldone^{\T}\hat\Omega_{\rm reg}^{-1}\boldone},
\qquad
\hat R_{\HRRAIPW}=\hat\lambda^{\T}\hat{\boldR}.
\]
Then
\[
\sqrt n(\hat R_{\HRRAIPW}-R)
\rightsquigarrow
N(0,(\lambda^*)^{\T}\Omega\lambda^*),
\]
and
\[
(\lambda^*)^{\T}\Omega\lambda^*
\leq \Omega_{\ell\ell} \qquad (\ell=1,\ldots,k+1).
\]
\end{theorem}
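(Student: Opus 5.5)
The proof has three steps: joint asymptotic linearity of the stacked vector $\hat{\boldR}$, consistency of the weights $\hat\lambda$, and a Slutsky argument using that the weights sum to one. The variance inequality is then a separate deterministic fact.

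\emph{Step 1: joint asymptotics of $\hat{\boldR}$.} For each $\ell$, Lemma~\ref{lem:aipw-risk} applies to $g_\ell$. For $\ell\ge2$, $g_\ell=f_{\ell-1}$ is fixed, so Assumption~\ref{ass:nuisance} holds trivially with $\hat h=h^\dagger=f_{\ell-1}$. For $\ell=1$ it is the lemma as stated. Stacking the $k+1$ expansions gives a $2(k+1)$-dimensional asymptotically linear representation with the i.i.d.\ summands $\{\psi^{(1)}(Z_i;g_\ell)-p_1,\psi^{(0)}(Z_i;g_\ell)-p_0\}_{\ell}$, which are square integrable because every $g_\ell$ is square integrable and $Y$ is bounded. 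The multivariate CLT then yields joint normality. The delta method for $(x,y)\mapsto x/y$ at $(p_1,p_0)$, applied componentwise as in Proposition~\ref{prop:rr-aipw}, gives
\[
\sqrt n(\hat{\boldR}-R\boldone)=\frac{1}{\sqrt n}\sum_{i=1}^n\boldphi_R(Z_i)+o_{P^*}(1)\rightsquigarrow N(0,\Omega).
\]

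\emph{Step 2: consistency of the weights.} By hypothesis and the remark after \eqref{eq:Omegareg}, $\hat\Omega_{\rm reg}\to_P\Omega$. Since $\Omega$ is nonsingular and matrix inversion is continuous at nonsingular matrices, $\hat\Omega_{\rm reg}^{-1}\to_P\Omega^{-1}$. Positive definiteness gives $\boldone^{\T}\Omega^{-1}\boldone>0$, so by the continuous mapping theorem $\hat\lambda\to_P\lambda^*$.

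\emph{Step 3: Slutsky.} Because $\boldone^{\T}\hat\lambda=1$ exactly,
\[
\sqrt n(\hat R_{\HRRAIPW}-R)=\hat\lambda^{\T}\sqrt n(\hat{\boldR}-R\boldone)=(\lambda^*)^{\T}\sqrt n(\hat{\boldR}-R\boldone)+(\hat\lambda-\lambda^*)^{\T}\sqrt n(\hat{\boldR}-R\boldone).
\]
The last term is $o_P(1)O_P(1)=o_P(1)$. The first term converges to $N(0,(\lambda^*)^{\T}\Omega\lambda^*)$ by Step 1.

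This is exactly where using the exact sum-to-one constraint matters. Without it, estimation error in the weights would not vanish, and $\hat\lambda$ is not independent of the data, so this care is the main subtlety. Note that $\hat\lambda$ does depend on the trial data, including through cross-fitted $\hat g_{1i}$ and $\tilde p_a$. The argument needs only consistency of $\hat\lambda$, not independence, and that consistency is supplied by the hypothesis $\hat\Omega\to_P\Omega$.

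\emph{Variance bound.} $\lambda^*$ minimizes the convex quadratic $\lambda^{\T}\Omega\lambda$ over the affine set $\Lambda$; this follows from Lagrange multipliers, or from Cauchy--Schwarz in the $\Omega$-inner product, which gives $1=(\boldone^{\T}\lambda)^2\le(\lambda^{\T}\Omega\lambda)(\boldone^{\T}\Omega^{-1}\boldone)$. The minimum value is $(\lambda^*)^{\T}\Omega\lambda^*=1/(\boldone^{\T}\Omega^{-1}\boldone)$. Each unit vector $e_\ell$ lies in $\Lambda$, so $(\lambda^*)^{\T}\Omega\lambda^*\le e_\ell^{\T}\Omega e_\ell=\Omega_{\ell\ell}$.
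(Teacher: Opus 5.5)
Your proposal is correct and follows essentially the same route as the paper: stack the componentwise delta-method expansions to get $\sqrt n(\hat{\boldR}-R\boldone)\rightsquigarrow N(0,\Omega)$, then obtain $\hat\lambda\to_P\lambda^*$ from $\hat\Omega_{\rm reg}\to_P\Omega$ by continuous mapping, use $\boldone^{\T}\hat\lambda=1$ to make the weight-estimation error $o_P(1)$, and get the domination bound from $e_\ell\in\Lambda$. Your Cauchy--Schwarz identification of the minimum value $1/(\boldone^{\T}\Omega^{-1}\boldone)$ is a small addition beyond the paper's Lagrange argument and does not change the approach.
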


By Theorem~\ref{thm:hybrid-rr}, the direct hybrid RR-AIPW estimator has asymptotic variance no greater than that of the trial-only RR-AIPW estimator. Arbitrarily inaccurate external predictions may receive negligible or negative weight but do not invalidate inference or increase the oracle asymptotic variance.

\subsection{Componentwise hybrid RR-AIPW estimation}
\label{sec:alternative}
The risk ratio also satisfies $R=1+\theta/p_0$, where $\theta=p_1-p_0$ is the marginal risk difference. For any $h$, define
\[
\psi^\theta(Z;h)=\psi^{(1)}(Z;h)-\psi^{(0)}(Z;h),
\qquad
\hat\theta_{\rm AIPW}(h)=n^{-1}\sum_{i=1}^n\psi^\theta(Z_i;h),
\]
and the \textit{plug-in} estimator of the risk ratio as
\[
\hat R_{\rm plug\text{-}RR\text{-}AIPW}(h)
=1+\frac{\hat\theta_{\rm AIPW}(h)}{\hat p_0(h)}.
\]
As shown next, the plug-in estimator is equivalent to the previous RR-AIPW estimator of eq.~\eqref{eq:RRAIPW}.

\begin{proposition}[Equivalence]
\label{prop:direct-plugin-equivalence}
For every $h$ such that $\hat p_0(h)\neq0$,
$\hat R_{\rm plug\text{-}RR\text{-}AIPW}(h)=\hat R_{\RRAIPW}(h)$.
\end{proposition}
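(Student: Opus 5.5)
The identity is purely algebraic, so the plan is a direct computation with no probabilistic input. First, use the definitions of $\psi^\theta$ and $\hat\theta_{\rm AIPW}$ together with linearity of the empirical mean. This gives, for every $h$,
\[
\hat\theta_{\rm AIPW}(h)=n^{-1}\sum_{i=1}^n\{\psi^{(1)}(Z_i;h)-\psi^{(0)}(Z_i;h)\}=\hat p_1(h)-\hat p_0(h),
\]
where $\hat p_a(h)$ is the AIPW risk estimator of eq.~\eqref{eq:phat}.

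Next, substitute this into the definition of the plug-in estimator. Since $\hat p_0(h)\neq0$, we may put everything over the common denominator $\hat p_0(h)$:
\[
\hat R_{\rm plug\text{-}RR\text{-}AIPW}(h)=1+\frac{\hat p_1(h)-\hat p_0(h)}{\hat p_0(h)}=\frac{\hat p_0(h)+\hat p_1(h)-\hat p_0(h)}{\hat p_0(h)}=\frac{\hat p_1(h)}{\hat p_0(h)}.
\]
The right-hand side is exactly $\hat R_{\RRAIPW}(h)$ as defined in eq.~\eqref{eq:RRAIPW}, which gives the claim.

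There is no genuine obstacle. The one point to check is that both estimators use the same $h$ in the numerator and in the denominator $\hat p_0(h)$, so that the cancellation is exact; this holds by construction. No assumptions beyond $\hat p_0(h)\neq0$ are needed; in particular, neither Assumption~\ref{ass:simple-randomization} nor Assumption~\ref{ass:nuisance} enters.
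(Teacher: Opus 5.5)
Your proof is correct and follows the same direct algebraic route as the paper: write $\hat\theta_{\rm AIPW}(h)=\hat p_1(h)-\hat p_0(h)$ and simplify $1+\{\hat p_1(h)-\hat p_0(h)\}/\hat p_0(h)$ to $\hat p_1(h)/\hat p_0(h)$. The only difference is that you state the linearity-of-the-mean step explicitly, which the paper leaves implicit, and you are right that neither assumption is needed beyond $\hat p_0(h)\neq0$.
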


Although the two representations are identical for a common prediction function, they motivate a componentwise hybrid RR-AIPW estimator when distinct weights are used for $\theta$ and $p_0$. Define
\begin{align*}
\hat{\boldtheta}
&=\{\hat\theta_{\rm AIPW}(\hat h),\hat\theta_{\rm AIPW}(f_1),\ldots,\hat\theta_{\rm AIPW}(f_k)\}^{\T},\\
\hat{\boldp}_0
&=\{\hat p_0(\hat h),\hat p_0(f_1),\ldots,\hat p_0(f_k)\}^{\T}.
\end{align*}
With $g_\ell$ as in Section~\ref{sec:hybrid_riskratio}, let
\begin{align*}
\boldphi_\theta(Z)
&=
\{\psi^\theta(Z;g_1)-\theta,\ldots,
\psi^\theta(Z;g_{k+1})-\theta\}^{\T},\\
\boldphi_0(Z)
&=
\{\psi^{(0)}(Z;g_1)-p_0,\ldots,
\psi^{(0)}(Z;g_{k+1})-p_0\}^{\T},
\end{align*}
and define
\[
\Sigma_\theta=\Cov\{\boldphi_\theta(Z)\},
\quad
\Sigma_0=\Cov\{\boldphi_0(Z)\},
\quad
\Sigma_{\theta0}=\Cov\{\boldphi_\theta(Z),\boldphi_0(Z)\}.
\]
If $\Sigma_\theta$ and $\Sigma_0$ are nonsingular, their respective oracle weights are given by
\[
\lambda_\theta^*
=
\frac{\Sigma_\theta^{-1}\boldone}{\boldone^{\T}\Sigma_\theta^{-1}\boldone},
\qquad
\lambda_0^*
=
\frac{\Sigma_0^{-1}\boldone}{\boldone^{\T}\Sigma_0^{-1}\boldone}.
\]
Let $\hat\Sigma_\theta$ and $\hat\Sigma_0$ denote the corresponding empirical covariance matrices, formed analogously to eq.~\eqref{eq:Omegahat} and regularized as in eq.~\eqref{eq:Omegareg} using scaled mean diagonal elements. Let $\hat\lambda_\theta,\hat\lambda_0\in\Lambda$ be the weights computed from these regularized covariance matrices, then the estimator
\[
\hat R_{\SepRRAIPW}
=
1+
\frac{\hat\lambda_\theta^{\T}\hat{\boldtheta}}
{\hat\lambda_0^{\T}\hat{\boldp}_0},
\]
again satisfies an asymptotic normality result.

\begin{theorem}[Componentwise hybrid RR-AIPW estimator]
\label{thm:separate-plug-in}
Suppose that the preceding conditions of Section~\ref{sec:alternative} hold. Let $\Sigma_\theta$ and $\Sigma_0$ be nonsingular, and assume $\hat\Sigma_\theta\longrightarrow_P\Sigma_\theta$ as well as $\hat\Sigma_0\longrightarrow_P\Sigma_0$.
Then,
\[
\sqrt n(\hat R_{\SepRRAIPW}-R)
\rightsquigarrow N(0,\widetilde V_{\rm sep}),
\]
where
\[
\widetilde V_{\rm sep}
=
\Var\left\{
\frac{(\lambda_\theta^*)^{\T}\boldphi_\theta(Z)}{p_0}
-
\frac{\theta(\lambda_0^*)^{\T}\boldphi_0(Z)}{p_0^2}
\right\}
=
\frac{(\lambda_\theta^*)^{\T}\Sigma_\theta\lambda_\theta^*}{p_0^2} +
\frac{\theta^2(\lambda_0^*)^{\T}\Sigma_0\lambda_0^*}{p_0^4} -
\frac{2\theta(\lambda_\theta^*)^{\T}\Sigma_{\theta0}\lambda_0^*}{p_0^3}.
\]
\end{theorem}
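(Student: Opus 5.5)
The plan is to reduce the claim to a joint central limit theorem for the two stacked vectors $(\hat{\boldtheta},\hat{\boldp}_0)$, followed by Slutsky's lemma for the estimated weights and the delta method for the map $(t,q)\mapsto 1+t/q$.

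\textbf{Step 1: joint asymptotic linearity.} For the first coordinates, which use $\hat h$, Lemma~\ref{lem:aipw-risk} gives asymptotic linearity of $\hat p_1(\hat h)$ and $\hat p_0(\hat h)$ with influence functions $\psi^{(a)}(Z;h^\dagger)-p_a$. Hence $\hat\theta_{\rm AIPW}(\hat h)=\hat p_1(\hat h)-\hat p_0(\hat h)$ is asymptotically linear with influence function $\psi^\theta(Z;h^\dagger)-\theta$. For the coordinates using the fixed $f_j$, the estimators are exact sample means of square-integrable functions with the correct means, by Proposition~\ref{prop:unbiasedness}. Stacking all $2(k+1)$ coordinates, one obtains
\[
\sqrt n\begin{pmatrix}\hat{\boldtheta}-\theta\boldone\\ \hat{\boldp}_0-p_0\boldone\end{pmatrix}
=\frac{1}{\sqrt n}\sum_{i=1}^n\begin{pmatrix}\boldphi_\theta(Z_i)\\ \boldphi_0(Z_i)\end{pmatrix}+o_{P^*}(1).
\]
The multivariate CLT then gives a joint normal limit with covariance blocks $\Sigma_\theta$, $\Sigma_0$ and $\Sigma_{\theta0}$. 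Square integrability of the $g_\ell$ and boundedness of $Y$ ensure finite second moments.

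\textbf{Step 2: weights.} The map $M\mapsto M^{-1}\boldone/(\boldone^\T M^{-1}\boldone)$ is continuous at any nonsingular $M$ with $\boldone^\T M^{-1}\boldone\neq0$, and this holds here since $\Sigma^{-1}$ is positive definite. The ridge term is $n^{-1/2}$ times an $O_P(1)$ quantity, so $\hat\Sigma_{\cdot,\rm reg}\to_P\Sigma_\cdot$. The continuous mapping theorem then yields $\hat\lambda_\theta\to_P\lambda_\theta^*$ and $\hat\lambda_0\to_P\lambda_0^*$. Because $\hat\lambda\in\Lambda$, we have $\hat\lambda^\T\theta\boldone=\theta$. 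This gives
\[
\sqrt n(\hat\lambda_\theta^\T\hat{\boldtheta}-\theta)=\hat\lambda_\theta^\T\sqrt n(\hat{\boldtheta}-\theta\boldone)=(\lambda_\theta^*)^\T\sqrt n(\hat{\boldtheta}-\theta\boldone)+o_P(1),
\]
using that $\sqrt n(\hat{\boldtheta}-\theta\boldone)=O_P(1)$. The same argument applies to $p_0$. The main point needing care is exactly this: the weights are estimated from the same sample, so their estimation error is not asymptotically negligible on its own. It only becomes negligible because the simplex constraint $\boldone^\T\hat\lambda=1$ cancels the centering terms, leaving the product of an $o_P(1)$ and an $O_P(1)$ factor. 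No sample splitting is needed.

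\textbf{Step 3: delta method.} Let $T_n=\hat\lambda_\theta^\T\hat{\boldtheta}$ and $Q_n=\hat\lambda_0^\T\hat{\boldp}_0$. By Step 2,
\[
\sqrt n(T_n-\theta,\,Q_n-p_0)=n^{-1/2}\sum_i\{(\lambda_\theta^*)^\T\boldphi_\theta(Z_i),\,(\lambda_0^*)^\T\boldphi_0(Z_i)\}+o_P(1).
\]
Apply the delta method to $G(t,q)=1+t/q$ at $(\theta,p_0)$, where $p_0>0$. The gradient there is $(1/p_0,\,-\theta/p_0^2)$, and $G(\theta,p_0)=1+\theta/p_0=R$. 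This gives the influence function
\[
(\lambda_\theta^*)^\T\boldphi_\theta(Z)/p_0-\theta(\lambda_0^*)^\T\boldphi_0(Z)/p_0^2,
\]
hence asymptotic normality with variance $\widetilde V_{\rm sep}$. Consistency $Q_n\to_P p_0>0$ guarantees that the estimator is well defined with probability tending to one. Expanding the variance of this linear combination with the blocks $\Sigma_\theta$, $\Sigma_0$ and $\Sigma_{\theta0}$ gives the displayed closed form.
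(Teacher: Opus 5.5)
Your proposal is correct and follows essentially the same route as the paper's proof: stacked asymptotic linearity, consistency of the ridge-regularized weights, the simplex identity $\hat\lambda^{\T}\hat{\boldtheta}-\theta=\hat\lambda^{\T}(\hat{\boldtheta}-\theta\boldone)$ that reduces the weight-estimation error to an $o_P(1)\cdot O_P(1)$ term, and the delta method for $1+u/v$ at $(\theta,p_0)$. Your explicit joint stacking of $(\hat{\boldtheta},\hat{\boldp}_0)$ spells out the joint convergence that the paper's delta-method step uses only implicitly, since the paper states the two expansions separately.
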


The componentwise hybrid RR-AIPW estimator does not generally satisfy a variance-domination guarantee for the risk ratio. Its two weight vectors minimize the variances of the risk difference and control risk separately, without optimizing the cross-covariance term in $\widetilde V_{\rm sep}$.

%% file: simulations2.tex
\section{Simulation study}
\label{sec:simulation}

\subsection{Design}
We conducted 2000 Monte Carlo replications of randomized trials with $n=500$ individuals, generating $d=10$ dimensional covariates according to
\[
X_i\sim N(0,I_d),\qquad
A_i\sim\operatorname{Bernoulli}(1/2),\qquad
Y_i\mid X_i,A_i=a
\sim
\operatorname{Bernoulli}\{m_a(X_i)\},
\]
where $I_d$ is the $(d \times d)$-identity matrix, $m_a(x)=\operatorname{expit}\{\eta_a(x)\}$, and $\eta_a$ is given below. Let
\[
\beta=(0.35,-0.25,0.20,0.15,-0.10,0.10,0,\ldots,0)^{\T} \in \R^d.
\]
We considered two scenarios. A linear scenario, given by $\eta_0(x)=\alpha_0+\beta^{\T}x$ and $\eta_1(x)=\eta_0(x)+0.35$; and a nonlinear scenario, given by
\begin{align*}
\eta_0(x)&=\alpha_0+\beta^{\T}x +0.45\sin(x_1)+0.30(x_2^2-1)-0.20x_3x_4,\\
\eta_1(x)&=\eta_0(x)+0.35+0.25\,\mathbb{I}(x_1>0)-0.15x_2.
\end{align*}

The arm-specific trial logistic regressions were correctly specified in the linear setting and misspecified in the nonlinear setting. The intercept was calibrated to yield $p_0 \simeq 0.25$; the results of Monte Carlo integration with 200000 observations were taken as true values of the risk ratio, yielding 1.2692 and 1.3608 in the linear and non-linear settings, respectively.

Each replication included three candidate outcome prediction functions. The trial candidate used five-fold cross-fitted, arm-specific logistic regression with all ten covariates as linear terms.

For the stronger external candidate, define $u(x)=\sin(x_5)+0.5x_6$ and $\sigma_u^2=(1-e^{-2})/2+1/4$, and set
\(
f_{{\rm strong},a}(x)
=
\operatorname{expit}\{\eta_a(x)+0.15u(x)/\sigma_u\}
\)
for $a=0,1$ and $x \in \R^d$.

For the moderate external candidate, let
\[
\widetilde\beta=(0.25,-0.18,0.12,0.08,0,\ldots,0)^{\T} \in \R^d,
\]
and define $f_{{\rm moderate},a}(x)=\operatorname{expit}\{\eta_{a,{\rm mod}}(x)\}$, where $\eta_{0,{\rm mod}}(x)=\alpha_0+\widetilde\beta^{\T}x$ and $\eta_{1,{\rm mod}}(x)=\eta_{0,{\rm mod}}(x)+0.30+0.10x_1$.

Denote $(\hat g_1,\hat g_2,\hat g_3)
=(\hat h,f_{\rm strong},f_{\rm moderate})$. For each candidate $\hat g_j$, treatment-specific risks were estimated using the AIPW scores with $\pi_0=\pi_1=1/2$, yielding
\(
\hat R_{\RRAIPW}(\hat g_j)=\hat p_1(\hat g_j) / \hat p_0(\hat g_j).
\)
We compared the trial-only estimator $\hat R_{\RRAIPW}(\hat h)$, the direct hybrid estimator
\[
\hat R_{\HRRAIPW}
=
\sum_{j=1}^3\hat\lambda_j^R\hat R_{\RRAIPW}(\hat g_j),
\]
and the componentwise hybrid estimator
\[
\hat R_{\SepRRAIPW}
=
1+
\frac{\sum_{j=1}^3\hat\lambda_j^\theta
\{\hat p_1(\hat g_j)-\hat p_0(\hat g_j)\}}
{\sum_{j=1}^3\hat\lambda_j^0\hat p_0(\hat g_j)}.
\]
For the direct estimator, we consider
\[
\hat\lambda^R=
\frac{\hat\Omega_{\rm reg}^{-1}\boldone}
{\boldone^{\T}\hat\Omega_{\rm reg}^{-1}\boldone},
\qquad
\hat\Omega_{\rm reg}
=
\hat\Omega+n^{-1/2}\bar\omega_R I_3,
\qquad
\bar\omega_R=3^{-1}\operatorname{tr}(\hat\Omega).
\]
The componentwise covariance matrices were regularized analogously. At $n=500$, the ridge fraction was $n^{-1/2}=0.0447$.

All estimated risk ratio influence functions used common plug-in values of $p_1$ and $p_0$ given by the trial-only cross-fitted AIPW estimates. Wald standard errors were obtained from the sample standard deviations of the combined estimated influence functions divided by $n^{1/2}$. No weight fallback or denominator truncation occurred in any replication.

We calculated the mean estimate, Monte Carlo standard deviation, empirical coverage of nominal 95\% Wald intervals, and implied sample size reduction. For a hybrid estimator $\hat R_{\rm hybrid}$, the percent sample size reduction (SSR) is estimated as
\[
\widehat{\rm SSR}(\%)
=
100\left\{
1-
\frac{\widehat V_{\rm MC}(\hat R_{\rm hybrid})}
{\widehat V_{\rm MC}(\hat R_{\RRAIPW})}
\right\},
\qquad
\hat n_{\rm hybrid}
=
n\,
\frac{\widehat V_{\rm MC}(\hat R_{\rm hybrid})}
{\widehat V_{\rm MC}(\hat R_{\RRAIPW})},
\]
where $\widehat V_{\rm MC}(\hat R_{\RRAIPW})$ denotes the Monte Carlo variance. The sample size interpretation assumes inverse-sample-size variance scaling. Paired percentile intervals for SSR were obtained from 1000 bootstrap resamples of the 2000 Monte Carlo replications.

\subsection{Results}
\begin{table}[h]
\tbl{Primary RR-AIPW simulation results for $n=500$. Sample size reduction (SSR) is relative to the trial-only RR-AIPW estimator}
{{\scriptsize\setlength{\tabcolsep}{3pt}
\begin{tabular}{llrrrrlr}
\hline
Setting & Estimator & Truth & Mean & SD & Coverage &
SSR, \% (95\% CI) & Required $n$\\
\hline
Linear & Trial-only RR-AIPW
    & 1.269 & 1.278 & 0.1815 & 0.956
    & 0.00 & 500\\
& Direct hybrid RR-AIPW
    & 1.269 & 1.278 & 0.1791 & 0.958
    & 2.65 (1.15, 4.08) & 487\\
& Componentwise hybrid RR-AIPW
    & 1.269 & 1.278 & 0.1791 & 0.957
    & 2.68 (1.23, 4.04) & 487\\[3pt]
Nonlinear & Trial-only RR-AIPW
    & 1.361 & 1.377 & 0.1888 & 0.953
    & 0.00 & 500\\
& Direct hybrid RR-AIPW
    & 1.361 & 1.377 & 0.1831 & 0.948
    & 5.94 (4.26, 7.53) & 470\\
& Componentwise hybrid RR-AIPW
    & 1.361 & 1.376 & 0.1829 & 0.951
    & 6.12 (4.40, 7.72) & 469\\
\hline
\end{tabular}}}
\label{tab:rr-primary-n500}
\end{table}

Table~\ref{tab:rr-primary-n500} reports our results. In the correctly specified linear setting, the direct hybrid RR-AIPW estimator achieved an implied SSR of 2.65\% (95\% bootstrap interval, $[1.15\%,4.08\%]$). The componentwise hybrid RR-AIPW estimator achieved an SSR of 2.68\% ($[1.23\%,4.04\%]$). Under inverse-sample-size variance scaling, these results correspond to requiring approximately 487 participants to match the precision of a trial-only RR-AIPW analysis with 500 participants.

In the nonlinear setting, the implied SSRs were 5.94\% ($[4.26\%,7.53\%]$) for the direct hybrid RR-AIPW estimator and 6.12\% ($[4.40\%,7.72\%]$) for the componentwise estimator, corresponding to approximately 470 and 469 participants, respectively. Empirical coverage remained close to the nominal 95\% level for all estimators. The small positive bias was consistent with the usual $O(n^{-1})$ second-order bias of ratio estimators. The mean regularized covariance condition numbers for the direct hybrid weights were approximately 41 and 29, and the maximum absolute weights across replications were 0.590 and 0.896.

%% file: discussion.tex
\section{Discussion}
\label{sec:discussion}
The direct hybrid estimator extends the variance-domination guarantee of \citet{Bartolomeis2025} from the average treatment effect to the risk ratio, allowing one to incorporate external predictions of arbitrary quality without degrading asymptotic precision. Our future work will assess whether the precision gains obtained by the proposed RR-AIPW estimators translate to clinical applications. This requires evaluation in completed randomized trials using prediction models trained on external clinical data, such as experimental data from historical trials or observational data from registries, electronic health records or claims. Such studies will be important for determining how performance is affected by imperfect transportability, differences in outcome definitions and covariate distributions, and lower event rates than those considered in the simulations of this work.

%% file: appendix_proofs_shortened.tex
\section{Proofs}
\label{sec:supp-proofs}

\subsection{Unbiasedness of the treatment-specific risk scores}

\begin{proof}[Proof of Proposition~\ref{prop:unbiasedness}]
We prove the result for $a=1$; the argument for $a=0$ is analogous. Conditional on $X$,
\begin{align*}
\E\{\psi^{(1)}(Z;h)\mid X\}
&=\pi_1^{-1}\E[A\{Y-h(X,1)\}\mid X]+h(X,1)\\
&=\E\{Y-h(X,1)\mid X,A=1\}+h(X,1)\\
&=\E(Y\mid X,A=1)=\E\{Y(1)\mid X\}.
\end{align*}
The second equality uses $\Prob(A=1\mid X)=\pi_1$, and the final equality follows from randomization and consistency. Taking expectations gives $\E\{\psi^{(1)}(Z;h)\}=p_1$.
\end{proof}

\subsection{Joint asymptotic linearity with an independent training sample}

\begin{proof}[Proof of Lemma~\ref{lem:aipw-risk}]
For fixed $h^\dagger$, Proposition~1 gives
\[
\sqrt n\{\hat{\boldp}(h^\dagger)-\boldp\}
=
\frac{1}{\sqrt n}\sum_{i=1}^n
\begin{pmatrix}
\psi^{(1)}(Z_i;h^\dagger)-p_1\\
\psi^{(0)}(Z_i;h^\dagger)-p_0
\end{pmatrix}.
\]
It remains to show that replacing $h^\dagger$ by $\hat h$ is first-order negligible. We give the argument for $a=1$. Put $\Delta_1(x)=\hat h(x,1)-h^\dagger(x,1)$. Then
\[
\sqrt n\{\hat p_1(\hat h)-\hat p_1(h^\dagger)\}
=
\frac{1}{\sqrt n}\sum_{i=1}^n
\left(1-\frac{A_i}{\pi_1}\right)\Delta_1(X_i).
\]
Conditional on $\hat h$, the summands are independent and have conditional mean zero. Their conditional variance is
\[
\E\left[
\left(1-\frac{A}{\pi_1}\right)^2\Delta_1(X)^2
\mathrel{\Big|}\hat h
\right]
=
\frac{\pi_0}{\pi_1}
\|\hat h(\cdot,1)-h^\dagger(\cdot,1)\|_{L^2(P)}^2.
\]
For $\varepsilon>0$, conditional Chebyshev gives a probability bounded by
\[
B_n=\varepsilon^{-2}\frac{\pi_0}{\pi_1}
\|\hat h(\cdot,1)-h^\dagger(\cdot,1)\|_{L^2(P)}^2.
\]
The unconditional probability is bounded by $\E^*\{\min(1,B_n)\}$. Since $B_n\to_{P^*}0$ and $0\leq\min(1,B_n)\leq1$, this expectation converges to zero. Hence
\[
\sqrt n\{\hat p_1(\hat h)-\hat p_1(h^\dagger)\}=o_{P^*}(1).
\]
The same argument for $a=0$ gives
\[
\sqrt n\{\hat p_0(\hat h)-\hat p_0(h^\dagger)\}=o_{P^*}(1).
\]
Stacking the coordinates yields the asserted asymptotic linear expansion. The multivariate central limit theorem gives the stated normal limit because the vector summands are independent, mean zero and square-integrable.
\end{proof}

\subsection{Cross-fitted nuisance estimation}

Let $I_1,\ldots,I_V$ be a random partition independent of the observations, with fixed $V$ and $|I_v|/n\to\nu_v\in(0,1)$. Let $\hat h^{(-v)}$ be trained without observations in $I_v$, and use $\hat h_i=\hat h^{(-v)}$ for $i\in I_v$. Suppose that, for every $v$ and $a$,
\[
\|\hat h^{(-v)}(\cdot,a)-h^\dagger(\cdot,a)\|_{L^2(P)}\longrightarrow_P0.
\]
Define
\[
\hat p_a^{\rm cf}
=n^{-1}\sum_{v=1}^V\sum_{i\in I_v}
\psi^{(a)}\{Z_i;\hat h^{(-v)}\}.
\]

\begin{lemma}[Cross-fitted expansion]
Under these conditions,
\[
\sqrt n
\begin{pmatrix}
\hat p_1^{\rm cf}-p_1\\
\hat p_0^{\rm cf}-p_0
\end{pmatrix}
=
\frac{1}{\sqrt n}\sum_{i=1}^n
\begin{pmatrix}
\psi^{(1)}(Z_i;h^\dagger)-p_1\\
\psi^{(0)}(Z_i;h^\dagger)-p_0
\end{pmatrix}
+o_P(1).
\]
\end{lemma}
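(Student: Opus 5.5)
The plan is to reduce the cross-fitted statement to a foldwise version of the argument in the proof of Lemma~\ref{lem:aipw-risk}. First, for each fold $v$ and each $a$, write
\[
\hat p_a^{\rm cf}-\hat p_a(h^\dagger)
=n^{-1}\sum_{v=1}^V\sum_{i\in I_v}
\bigl[\psi^{(a)}\{Z_i;\hat h^{(-v)}\}-\psi^{(a)}(Z_i;h^\dagger)\bigr],
\]
where $\hat p_a(h^\dagger)$ is the full-sample AIPW estimator with the fixed function $h^\dagger$. By Proposition~\ref{prop:unbiasedness}, $\sqrt n\{\hat p_a(h^\dagger)-p_a\}$ is exactly the leading i.i.d.\ sum in the display. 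So it suffices to show that each of the finitely many fold remainders is $o_P(1)$ after multiplication by $\sqrt n$. Since $V$ is fixed, a finite sum of $o_P(1)$ terms is $o_P(1)$, and stacking the two coordinates gives the vector statement.

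For fold $v$ and $a=1$, set $\Delta_v(x)=\hat h^{(-v)}(x,1)-h^\dagger(x,1)$. The fold remainder is
\[
T_v=\frac{1}{\sqrt n}\sum_{i\in I_v}\Bigl(1-\frac{A_i}{\pi_1}\Bigr)\Delta_v(X_i),
\]
and the analogous expression with $(1-A_i)/\pi_0$ handles $a=0$. The key step is to condition on the partition and on the training observations $\{Z_j:j\notin I_v\}$. Given these, $\hat h^{(-v)}$ and hence $\Delta_v$ are fixed. Because the partition is independent of the data, the $Z_i$ with $i\in I_v$ remain i.i.d.\ from $P$ and independent of the conditioning variables. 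Under Assumption~\ref{ass:simple-randomization}, $\E(1-A/\pi_1\mid X)=0$, so the summands are conditionally mean zero and independent. Their conditional variance is
\[
\frac{|I_v|}{n}\,\frac{\pi_0}{\pi_1}\,\|\Delta_v\|_{L^2(P)}^2 .
\]

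Conditional Chebyshev then bounds $\Prob(|T_v|>\varepsilon\mid\cdot)$ by
\[
B_{n,v}=\varepsilon^{-2}\frac{\pi_0}{\pi_1}\|\Delta_v\|_{L^2(P)}^2,
\]
using $|I_v|/n\le1$. Taking expectations, $\Prob(|T_v|>\varepsilon)\le\E\{\min(1,B_{n,v})\}$. This tends to zero by bounded convergence, since $B_{n,v}\to_P0$ by the foldwise $L^2(P)$ assumption. This conditioning step is the main obstacle. It is where the bias-type cross term, which would otherwise require rates, vanishes, and it relies on sample splitting making $\hat h^{(-v)}$ independent of the evaluation fold.

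Finally, the condition $|I_v|/n\to\nu_v$ is not needed for the remainder bound itself. It only ensures that the folds are nondegenerate, so the leading term is unaffected.
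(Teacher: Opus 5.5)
Your proposal is correct and follows the paper's proof essentially step for step. You decompose the remainder by folds, condition on the data outside $I_v$ so that $\hat h^{(-v)}$ is fixed, use the conditional mean-zero property and the variance bound $\frac{\pi_0}{\pi_1}\|\Delta_v\|_{L^2(P)}^2$, and sum finitely many $o_P(1)$ terms. The only differences are cosmetic: you bound $|I_v|/n\le 1$ directly instead of multiplying and dividing by $|I_v|^{1/2}$, and you write out the conditional Chebyshev and bounded-convergence step that the paper leaves implicit here, having used it in the proof of Lemma~\ref{lem:aipw-risk}.
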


\begin{proof}
For $a=1$, the remainder is
\[
\sum_{v=1}^V
\frac{1}{\sqrt n}\sum_{i\in I_v}
\left(1-\frac{A_i}{\pi_1}\right)
\{\hat h^{(-v)}(X_i,1)-h^\dagger(X_i,1)\}.
\]
For a fixed fold $v$, multiply and divide by $|I_v|^{1/2}$. Conditional on the training data outside $I_v$, the normalized sum over $I_v$ has mean zero and variance at most
\[
\frac{\pi_0}{\pi_1}
\|\hat h^{(-v)}(\cdot,1)-h^\dagger(\cdot,1)\|_{L^2(P)}^2=o_P(1).
\]
The additional factor $(|I_v|/n)^{1/2}$ is bounded. Thus each fold contribution is $o_P(1)$, and their sum is $o_P(1)$ because $V$ is fixed. The argument for $a=0$ is identical. Adding the fixed-$h^\dagger$ empirical-process term gives the expansion, and the multivariate central limit theorem gives the limit distribution.
\end{proof}

\subsection{Consistency of the plug-in covariance estimators}
Let $\tilde p_a\to_P p_a$ for $a=0,1$, let $\hat g_{1i}$ be the cross-fitted trial prediction used for observation $i$, and let the remaining candidate functions be fixed. Under the foldwise $L^2(P)$ convergence conditions above,
\[
\frac1n\sum_{i=1}^n
\|\hat{\boldphi}_{Ri}-\boldphi_R(Z_i)\|^2=o_P(1).
\]
Indeed, the AIPW scores are linear in the candidate prediction functions, while the coefficients in the risk-ratio influence function are continuous in $(p_1,p_0)$ on a neighbourhood with $p_0>0$. The displayed empirical $L^2$ convergence therefore follows from the foldwise argument and consistency of the common plug-in risks. Since the candidate dimension is fixed and $\boldphi_R(Z)$ is square-integrable, Cauchy--Schwarz and the law of large numbers give
\[
\hat\Omega-\Omega=o_P(1).
\]
The identical argument applied to $\boldphi_\theta$ and $\boldphi_0$ yields $\hat\Sigma_\theta\to_P\Sigma_\theta$ and $\hat\Sigma_0\to_P\Sigma_0$.

\subsection{The RR-AIPW estimator}

\begin{proof}[Proof of Proposition~\ref{prop:rr-aipw}]
The joint expansion gives
\[
\sqrt n
\begin{pmatrix}
\hat p_1(\hat h)-p_1\\
\hat p_0(\hat h)-p_0
\end{pmatrix}
=
\frac{1}{\sqrt n}\sum_{i=1}^n
\begin{pmatrix}
\psi^{(1)}(Z_i;h^\dagger)-p_1\\
\psi^{(0)}(Z_i;h^\dagger)-p_0
\end{pmatrix}
+o_{P^*}(1).
\]
The map $g(u,v)=u/v$ is continuously differentiable near $(p_1,p_0)$ because $p_0>0$, with gradient $\nabla g(p_1,p_0)=(p_0^{-1},-p_1p_0^{-2})$. Using the definition of $\phi_R$, the delta method therefore yields
\[
\sqrt n\{\hat R_{\RRAIPW}(\hat h)-R\}
=\frac{1}{\sqrt n}\sum_{i=1}^n\phi_R(Z_i;h^\dagger)+o_{P^*}(1).
\]
The influence function has mean zero and finite variance, so the conclusion follows from the central limit theorem and Slutsky's theorem.

Writing $\phi_a(Z;h)=\psi^{(a)}(Z;h)-p_a$ $(a=0,1)$, the asymptotic variance is
\[
V_R^h
=
\frac{\Var\{\phi_1(Z;h)\}}{p_0^2}
+
\frac{p_1^2\Var\{\phi_0(Z;h)\}}{p_0^4}
-
\frac{2p_1\Cov\{\phi_1(Z;h),\phi_0(Z;h)\}}{p_0^3}.
\]
\end{proof}
\subsection{Variance decomposition}
\begin{proof}[Proof of Proposition~\ref{prop:vardecomp}]
For $a=0,1$, put $\delta_a(x)=h(x,a)-h^*(x,a)$. Subtraction of the risk scores gives
\begin{align*}
\psi^{(1)}(Z;h)-\psi^{(1)}(Z;h^*)&=(1-A/\pi_1)\delta_1(X),\\
\psi^{(0)}(Z;h)-\psi^{(0)}(Z;h^*)&=-\frac{\pi_1}{\pi_0}(1-A/\pi_1)\delta_0(X).
\end{align*}
Substitution in the definition of $\phi_R$ gives
\[
\phi_R(Z;h)-\phi_R(Z;h^*)
=\frac{1-A/\pi_1}{p_0}
\left\{\delta_1(X)+\frac{\pi_1R}{\pi_0}\delta_0(X)\right\}
=:D(Z).
\]
By randomization, $\E(D\mid X)=0$. To prove orthogonality, write
\begin{align*}
\phi_R(Z;h^*)
={}&\frac{A}{\pi_1p_0}\{Y-h^*(X,1)\}
-\frac{R(1-A)}{\pi_0p_0}\{Y-h^*(X,0)\}\\
&+\frac{h^*(X,1)-Rh^*(X,0)}{p_0}.
\end{align*}
The last term is a function of $X$, so its product with $D$ has expectation zero. The first two terms have conditional expectation zero given $(X,A)$, whereas $D$ is measurable with respect to $(X,A)$. Their products with $D$ also have expectation zero. All products are integrable because the outcomes are bounded and $\delta_0,\delta_1\in L^2(P)$. Thus $\E\{\phi_R(Z;h^*)D(Z)\}=0$.

Finally, $A$ is independent of $X$, and
\[
\E\{(1-A/\pi_1)^2\}=\frac{\pi_0}{\pi_1}.
\]
It follows that
\[
\Var\{\phi_R(Z;h)\}
=V_R^{h^*}+\E\{D(Z)^2\}
=V_R^{h^*}+\frac{\pi_0}{\pi_1p_0^2}\|\epsilon[h]\|_{L^2(P)}^2.
\]
Since the multiplier of the squared norm is positive, equality with $V_R^{h^*}$ is equivalent to $\epsilon[h]=0$ almost surely.
\end{proof}

For completeness, if $\phi_a(Z;h)=\psi^{(a)}(Z;h)-p_a$, direct expansion also gives the usual expression
\[
V_R^h=\frac{\Var\{\phi_1(Z;h)\}}{p_0^2}
+\frac{p_1^2\Var\{\phi_0(Z;h)\}}{p_0^4}
-\frac{2p_1\Cov\{\phi_1(Z;h),\phi_0(Z;h)\}}{p_0^3}.
\]

\subsection{The direct hybrid RR-AIPW estimator}

\begin{proof}[Proof of Theorem~\ref{thm:hybrid-rr}]
For the trial candidate, Proposition~2 gives
\[
\sqrt n(\hat R_{\RRAIPW,1}-R)
=\frac{1}{\sqrt n}\sum_{i=1}^n\phi_{R,1}(Z_i)+o_{P^*}(1).
\]
For $\ell\geq2$, $g_\ell$ is fixed and square-integrable, so the same argument gives
\[
\sqrt n(\hat R_{\RRAIPW,\ell}-R)
=\frac{1}{\sqrt n}\sum_{i=1}^n\phi_{R,\ell}(Z_i)+o_P(1).
\]
Because the number of candidates is fixed,
\[
\sqrt n(\hat{\boldR}-R\boldone)
=\frac{1}{\sqrt n}\sum_{i=1}^n\boldphi_R(Z_i)+o_{P^*}(1)
\rightsquigarrow N(0,\Omega).
\]
For fixed $\lambda\in\Lambda$,
\[
\sqrt n(\lambda^{\T}\hat{\boldR}-R)
\rightsquigarrow N(0,\lambda^{\T}\Omega\lambda).
\]
The constrained quadratic minimization has solution
\[
\lambda^*=\frac{\Omega^{-1}\boldone}{\boldone^{\T}\Omega^{-1}\boldone},
\]
obtained from the Lagrange first-order condition $2\Omega\lambda-\eta\boldone=0$.

The implemented covariance estimator satisfies
\[
\hat\Omega_{\rm reg}-\Omega
=(\hat\Omega-\Omega)+n^{-1/2}\bar\omega_R I_m.
\]
Since $\hat\Omega\to_P\Omega$, continuity of the trace gives $\bar\omega_R=m^{-1}\operatorname{tr}(\hat\Omega)=O_P(1)$, and hence $n^{-1/2}\bar\omega_R=o_P(1)$. Thus $\hat\Omega_{\rm reg}\to_P\Omega$. Because $\Omega$ is nonsingular, the continuous mapping theorem gives $\hat\lambda\to_P\lambda^*$.

Both $\hat\lambda$ and $\lambda^*$ have entries summing to one. Hence
\[
\sqrt n\{\hat R_{\HRRAIPW}- (\lambda^*)^{\T}\hat{\boldR}\}
=(\hat\lambda-\lambda^*)^{\T}\sqrt n(\hat{\boldR}-R\boldone)=o_P(1),
\]
because the first factor is $o_P(1)$ and the second is $O_P(1)$. Slutsky's theorem gives
\[
\sqrt n(\hat R_{\HRRAIPW}-R)
\rightsquigarrow N\{0,(\lambda^*)^{\T}\Omega\lambda^*\}.
\]
For the domination result, let $e_\ell$ be the $\ell$th canonical basis vector. Since $e_\ell\in\Lambda$,
\[
(\lambda^*)^{\T}\Omega\lambda^*
\leq e_\ell^{\T}\Omega e_\ell
=\Omega_{\ell\ell}
\quad (\ell=1,\ldots,k+1).
\]
\end{proof}

\subsection{Efficiency of the direct hybrid estimator}
\begin{corollary}[Efficiency of the direct hybrid estimator]
\label{cor:efficiency}
Under the conditions of Theorem~\ref{thm:hybrid-rr}, let $\epsilon_\ell=\epsilon[g_\ell]$. For every $\lambda\in\Lambda$,
\begin{equation}
\label{eq:hybrid-vardecomp}
\lambda^{\T}\Omega\lambda
=V_R^{h^*}+\frac{\pi_0}{\pi_1p_0^2}
\left\|\sum_{\ell=1}^{k+1}\lambda_\ell\epsilon_\ell\right\|_{L^2(P)}^2.
\end{equation}
The direct hybrid estimator attains the efficiency bound if and only if there exists $\lambda\in\Lambda$ with $\sum_{\ell=1}^{k+1}\lambda_\ell\epsilon_\ell=0$ almost surely.
\end{corollary}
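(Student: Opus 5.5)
The plan is to reduce Corollary~\ref{cor:efficiency} to Proposition~\ref{prop:vardecomp}. The key observation is that $\phi_R(Z;h)$ is affine in $h$. Fix $\lambda\in\Lambda$. Because $\boldone^{\T}\lambda=1$, we have $\lambda^{\T}\Omega\lambda=\Var\{\sum_\ell\lambda_\ell\phi_R(Z;g_\ell)\}$. Applying the pointwise identity of Proposition~\ref{prop:vardecomp} to each $g_\ell$ gives
\[
\sum_{\ell=1}^{k+1}\lambda_\ell\phi_R(Z;g_\ell)
=\Bigl(\sum_{\ell}\lambda_\ell\Bigr)\phi_R(Z;h^*)
+\frac{1-A/\pi_1}{p_0}\sum_{\ell}\lambda_\ell\epsilon_\ell(X)
=\phi_R(Z;h^*)+\frac{1-A/\pi_1}{p_0}\epsilon_\lambda(X),
\]
where $\epsilon_\lambda=\sum_\ell\lambda_\ell\epsilon_\ell$. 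This is the only step that uses the constraint $\boldone^{\T}\lambda=1$: without it, the coefficient on $\phi_R(Z;h^*)$ would not be one. The same expression also follows because $\epsilon[\cdot]$ is linear in $h-h^*$, and $\sum_\ell\lambda_\ell g_\ell$ is itself a square-integrable prediction function whose error is $\epsilon_\lambda$.

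Next, I reuse the orthogonality argument from the proof of Proposition~\ref{prop:vardecomp}, with $\delta_1+\pi_1R\pi_0^{-1}\delta_0$ replaced by the square-integrable function $\epsilon_\lambda$. That argument shows $D_\lambda(Z)=(1-A/\pi_1)\epsilon_\lambda(X)/p_0$ is uncorrelated with $\phi_R(Z;h^*)$. It also gives $\E(D_\lambda^2)=\pi_0\|\epsilon_\lambda\|^2_{L^2(P)}/(\pi_1p_0^2)$, using the independence of $A$ and $X$. Together these yield \eqref{eq:hybrid-vardecomp}. A shortcut is to apply Proposition~\ref{prop:vardecomp} directly to $h_\lambda=\sum_\ell\lambda_\ell g_\ell$, noting that $\phi_R(Z;h_\lambda)=\sum_\ell\lambda_\ell\phi_R(Z;g_\ell)$ by affineness together with $\boldone^{\T}\lambda=1$.

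For the efficiency statement, Theorem~\ref{thm:hybrid-rr} says the direct hybrid estimator has asymptotic variance $(\lambda^*)^{\T}\Omega\lambda^*=\min_{\lambda\in\Lambda}\lambda^{\T}\Omega\lambda$. By \eqref{eq:hybrid-vardecomp}, this minimum equals $V_R^{h^*}$ if and only if $\|\epsilon_{\lambda^*}\|=0$.
\begin{itemize}
\item If $\epsilon_{\lambda^*}=0$ almost surely, then $\lambda=\lambda^*$ is the required weight vector.
\item Conversely, suppose some $\lambda\in\Lambda$ has $\epsilon_\lambda=0$ almost surely. Then $\lambda^{\T}\Omega\lambda=V_R^{h^*}$. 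Since $V_R^{h^*}$ is a lower bound for every $\lambda^{\T}\Omega\lambda$ by \eqref{eq:hybrid-vardecomp}, and $\lambda^*$ minimizes over $\Lambda$, we get $(\lambda^*)^{\T}\Omega\lambda^*=V_R^{h^*}$.
\end{itemize}

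There is one subtlety. Nonsingularity of $\Omega$ is assumed, and since $\Omega$ is only a covariance matrix it is positive semidefinite. If $\sum_\ell\lambda_\ell\epsilon_\ell=0$ for some $\lambda$, could two distinct such $\lambda$ make $\Omega$ singular? For any two $\lambda,\lambda'\in\Lambda$, \eqref{eq:hybrid-vardecomp} gives
\[
(\lambda-\lambda')^{\T}\Omega(\lambda-\lambda')
=\frac{\pi_0}{\pi_1p_0^2}\bigl\|\epsilon_\lambda-\epsilon_{\lambda'}\bigr\|^2_{L^2(P)}
\]
(polarization, since $\boldone^{\T}(\lambda-\lambda')=0$). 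So if $\epsilon_\lambda=\epsilon_{\lambda'}=0$, the vector $\lambda-\lambda'$ lies in the kernel of $\Omega$, and nonsingularity forces $\lambda=\lambda'$. The zero-error weight is therefore unique, which is consistent with the corollary but not needed for it. The main care point is the affine-combination step; everything else is inherited from Proposition~\ref{prop:vardecomp}.
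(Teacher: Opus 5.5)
Your proposal is correct and follows the paper's proof. You sum the pointwise identity of Proposition~\ref{prop:vardecomp} with weights in $\Lambda$, reuse the orthogonality argument to take variances, and then use that $\lambda^*$ minimizes $\lambda^{\T}\Omega\lambda$ over $\Lambda$ for both directions; your shortcut via $h_\lambda=\sum_\ell\lambda_\ell g_\ell$ and the uniqueness remark are valid extras (one cosmetic slip: $\lambda^{\T}\Omega\lambda=\Var\{\lambda^{\T}\boldphi_R(Z)\}$ holds for every $\lambda$, not because $\boldone^{\T}\lambda=1$).
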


Consequently, the oracle weights also minimize the squared $L^2(P)$ norm of the combined error. The combinations $\sum_\ell\lambda_\ell\epsilon_\ell$, with $\lambda\in\Lambda$, form the affine hull of the error functions in $L^2(P)$; the minimizing combination is the element closest to zero. The hybrid is efficient if $h^\dagger=h^*$ or $f_j=h^*$ for some $j$, but efficiency can also result from cancellation of the errors of several misspecified candidates. 
\begin{proof}[Proof of Corollary~1]
For $\lambda\in\Lambda$, Proposition~3 and $\sum_\ell\lambda_\ell=1$ give
\[
\lambda^{\T}\boldphi_R(Z)
=\phi_R(Z;h^*)+
\frac{1-A/\pi_1}{p_0}\sum_{\ell=1}^{k+1}\lambda_\ell\epsilon_\ell(X).
\]
The orthogonality argument in that proposition applies to each $\epsilon_\ell$ and hence to their linear combination. Taking variances proves equation~\eqref{eq:hybrid-vardecomp}. Theorem~1 identifies $\lambda^*$ as a minimizer over $\Lambda$. Therefore its variance equals $V_R^{h^*}$ precisely when the squared norm at the minimum is zero. If any $\lambda\in\Lambda$ gives a zero combined error, the minimum is zero; conversely, if the minimum is zero, $\lambda^*$ is such a vector. This proves the equivalence.
\end{proof}

\subsection{Equivalence of the two single-model RR-AIPW representations}

\begin{proof}[Proof of Proposition~\ref{prop:direct-plugin-equivalence}]
For any $h$ with $\hat p_0(h)\neq0$,
\[
1+\frac{\hat\theta_{\rm AIPW}(h)}{\hat p_0(h)}
=1+\frac{\hat p_1(h)-\hat p_0(h)}{\hat p_0(h)}
=\frac{\hat p_1(h)}{\hat p_0(h)}
=\hat R_{\RRAIPW}(h).
\]
\end{proof}

\subsection{The componentwise hybrid RR-AIPW estimator}

\begin{proof}[Proof of Theorem~\ref{thm:separate-plug-in}]
Finite-dimensional stacking gives
\begin{align*}
\sqrt n(\hat{\boldtheta}-\theta\boldone)
&=\frac{1}{\sqrt n}\sum_{i=1}^n\boldphi_\theta(Z_i)+o_{P^*}(1),\\
\sqrt n(\hat{\boldp}_0-p_0\boldone)
&=\frac{1}{\sqrt n}\sum_{i=1}^n\boldphi_0(Z_i)+o_{P^*}(1).
\end{align*}
The same vanishing-ridge argument used for $\hat\Omega_{\rm reg}$ shows that the regularized empirical covariance matrices for $\boldphi_\theta$ and $\boldphi_0$ remain consistent for $\Sigma_\theta$ and $\Sigma_0$. Therefore $\hat\lambda_\theta\to_P\lambda_\theta^*$ and $\hat\lambda_0\to_P\lambda_0^*$.

Since $\hat\lambda_\theta,\lambda_\theta^*,\hat\lambda_0,\lambda_0^*\in\Lambda$,
\begin{align*}
\hat\lambda_\theta^{\T}\hat{\boldtheta}-(\lambda_\theta^*)^{\T}\hat{\boldtheta}
&=(\hat\lambda_\theta-\lambda_\theta^*)^{\T}(\hat{\boldtheta}-\theta\boldone),\\
\hat\lambda_0^{\T}\hat{\boldp}_0-(\lambda_0^*)^{\T}\hat{\boldp}_0
&=(\hat\lambda_0-\lambda_0^*)^{\T}(\hat{\boldp}_0-p_0\boldone).
\end{align*}
Thus
\begin{align*}
\sqrt n\{\hat\lambda_\theta^{\T}\hat{\boldtheta}-\theta\}
&=\frac{1}{\sqrt n}\sum_{i=1}^n(\lambda_\theta^*)^{\T}\boldphi_\theta(Z_i)+o_{P^*}(1),\\
\sqrt n\{\hat\lambda_0^{\T}\hat{\boldp}_0-p_0\}
&=\frac{1}{\sqrt n}\sum_{i=1}^n(\lambda_0^*)^{\T}\boldphi_0(Z_i)+o_{P^*}(1).
\end{align*}
Apply the delta method to $g(u,v)=1+u/v$ at $(\theta,p_0)$. Its gradient at this point is $(p_0^{-1},-\theta p_0^{-2})$, so
\[
\sqrt n(\hat R_{\SepRRAIPW}-R)
=\frac{1}{\sqrt n}\sum_{i=1}^n
\left[
\frac{(\lambda_\theta^*)^{\T}\boldphi_\theta(Z_i)}{p_0}
-
\frac{\theta(\lambda_0^*)^{\T}\boldphi_0(Z_i)}{p_0^2}
\right]
+o_{P^*}(1).
\]
The central limit theorem gives the stated normal limit. Expanding the variance gives
\begin{align*}
\widetilde V_{\rm sep}
={}&
\frac{(\lambda_\theta^*)^{\T}\Sigma_\theta\lambda_\theta^*}{p_0^2}
+
\frac{\theta^2(\lambda_0^*)^{\T}\Sigma_0\lambda_0^*}{p_0^4}\\
&-
\frac{2\theta(\lambda_\theta^*)^{\T}\Sigma_{\theta0}\lambda_0^*}{p_0^3}.
\end{align*}
\end{proof}

%% file: main.bib
@article{Bartolomeis2025,
  author  = {De Bartolomeis, Piersilvio and Abad, Javier and Wang, Guanbo and Donhauser, Konstantin and Duch, Raymond M. and Yang, Fanny and Dahabreh, Issa J.},
  title   = {Efficient randomized experiments using foundation models},
  journal = {arXiv:2502.04262},
  year    = {2025},
  note    = {Accepted for presentation at the Thirty-Ninth Conference on Neural Information Processing Systems}
}

@article{Lin2024,
  author  = {Lin, Xi and Tarp, Jens Magelund and Evans, Robin J.},
  title   = {Data fusion for efficiency gain in {ATE} estimation: a practical review with simulations},
  journal = {arXiv:2407.01186},
  year    = {2024}
}

@article{Sistrom2004,
  author  = {Sistrom, Christopher L. and Garvan, Cynthia W.},
  title   = {Proportions, odds, and risk},
  journal = {Radiology},
  year    = {2004},
  volume  = {230},
  number  = {1},
  pages   = {12--19},
  doi     = {10.1148/radiol.2301031028}
}

@book{Carneiro2011,
  author    = {Carneiro, Ilona and Howard, Natasha},
  title     = {Introduction to Epidemiology},
  edition   = {2},
  publisher = {Open University Press},
  address   = {Maidenhead, UK},
  year      = {2011}
}

@article{Poulet2025,
  author  = {Poulet, Pierre-Emmanuel and Tran, Maylis and Tezenas du Montcel, Sophie and Dubois, Bruno and Durrleman, Stanley and Jedynak, Bruno},
  title   = {Prediction-powered inference for clinical trials: application to linear covariate adjustment},
  journal = {BMC Med Res Method},
  year    = {2025},
  volume  = {25},
  pages   = {204},
  doi     = {10.1186/s12874-025-02647-6}
}

@article{Luedtke2019,
  author  = {Luedtke, Alex and Carone, Marco and van der Laan, Mark J.},
  title   = {An omnibus non-parametric test of equality in distribution for unknown functions},
  journal = {J R Stat Soc Ser B},
  year    = {2019},
  volume  = {81},
  number  = {1},
  pages   = {75--99},
  doi     = {10.1111/rssb.12299}
}

@inproceedings{Hussain2023,
  author    = {Hussain, Zeshan and Shih, Ming-Chieh and Oberst, Michael and Demirel, Ilker and Sontag, David},
  title     = {Falsification of internal and external validity in observational studies via conditional moment restrictions},
  booktitle = {Proceedings of the 26th International Conference on Artificial Intelligence and Statistics},
  series    = {Proc Mach Learn Res},
  volume    = {206},
  pages     = {5869--5898},
  publisher = {PMLR},
  year      = {2023}
}

@article{Dahabreh2024,
  author  = {Dahabreh, Issa J. and Matthews, Anthony and Steingrimsson, Jon and Scharfstein, Daniel O. and Stuart, Elizabeth A.},
  title   = {Using trial and observational data to assess effectiveness: trial emulation, transportability, benchmarking, and joint analysis},
  journal = {Epidemiol Rev},
  year    = {2024},
  volume  = {46},
  number  = {1},
  pages   = {1--16},
  doi     = {10.1093/epirev/mxac011}
}

@inproceedings{Bartolomeis2024a,
  author    = {De Bartolomeis, Piersilvio and Abad Martinez, Javier and Donhauser, Konstantin and Yang, Fanny},
  title     = {Hidden yet quantifiable: a lower bound for confounding strength using randomized trials},
  booktitle = {Proceedings of the 27th International Conference on Artificial Intelligence and Statistics},
  series    = {Proc Mach Learn Res},
  volume    = {238},
  pages     = {1045--1053},
  publisher = {PMLR},
  year      = {2024}
}

@inproceedings{Bartolomeis2024b,
  author    = {De Bartolomeis, Piersilvio and Abad, Javier and Donhauser, Konstantin and Yang, Fanny},
  title     = {Detecting critical treatment effect bias in small subgroups},
  booktitle = {Proceedings of the Fortieth Conference on Uncertainty in Artificial Intelligence},
  series    = {Proc Mach Learn Res},
  volume    = {244},
  pages     = {943--965},
  publisher = {PMLR},
  year      = {2024}
}

@inproceedings{Kallus2018,
  author    = {Kallus, Nathan and Puli, Aahlad Manas and Shalit, Uri},
  title     = {Removing hidden confounding by experimental grounding},
  booktitle = {Advances in Neural Information Processing Systems},
  volume    = {31},
  pages     = {10911--10920},
  year      = {2018}
}

@article{Rosenman2023,
  author  = {Rosenman, Evan T. R. and Basse, Guillaume and Owen, Art B. and Baiocchi, Mike},
  title   = {Combining observational and experimental datasets using shrinkage estimators},
  journal = {Biometrics},
  year    = {2023},
  volume  = {79},
  number  = {4},
  pages   = {2961--2973},
  doi     = {10.1111/biom.13827}
}

@article{Laan2024,
  author  = {van der Laan, Mark and Qiu, Sky and Tarp, Jens Magelund and van der Laan, Lars},
  title   = {Adaptive-TMLE for the average treatment effect based on randomized controlled trial augmented with real-world data},
  journal = {J Causal Inference},
  year    = {2026},
  volume  = {14},
  number  = {1},
  pages   = {1--36},
  doi     = {10.1515/jci-2024-0025}
}

@article{Dang2025,
  author  = {Dang, Lauren Eyler and Tarp, Jens Magelund and Abrahamsen, Trine Julie and Kvist, Kajsa and Buse, John B. and Petersen, Maya and van der Laan, Mark},
  title   = {Experiment-selector cross-validated targeted maximum likelihood estimator for hybrid RCT-external data studies},
  journal = {J Causal Inference},
  year    = {2025},
  volume  = {13},
  number  = {1},
  pages   = {1--33},
  doi     = {10.1515/jci-2024-0041}
}

@book{tsiatis2006semiparametric,
  title     = {Semiparametric Theory and Missing Data},
  author    = {Tsiatis, Anastasios A.},
  year      = {2006},
  publisher = {Springer},
  address   = {New York}
}
